\documentclass{amsart}

\textwidth  480pt
\textheight 660pt
\oddsidemargin  0pt
\evensidemargin 0pt
\topmargin  -5pt

\newcommand{\IN}{\mathbb N}
\newcommand{\IR}{\mathbb R}
\newcommand{\IZ}{\mathbb Z}
\newcommand{\IM}{\mathbb M}
\newcommand{\IT}{\mathbb T}
\newcommand{\IC}{\mathbb C}
\newcommand{\w}{\omega}
\newcommand{\e}{\varepsilon}
\newcommand{\id}{\mathrm{id}}

\newtheorem{theorem}{Theorem}

\newtheorem{corollary}{Corollary}
\theoremstyle{definition}
\newtheorem{definition}{Definition}

\newtheorem{remark}{Remark}

\title{Manifolds admitting a continuous cancellative binary operation are orientable}

\author{Taras Banakh, Igor Guran, Alex Ravsky}
\address{T.Banakh: Ivan Franko National University of Lviv (Ukraine), and Jan Kochanowski University in Kielce (Poland)}
\email{t.o.banakh@gmail.com}
\address{I.Guran: Faculty of Mechanics and Mathematics, Ivan Franko National University of Lviv,  Ukraine}
\email{igor\_guran@yahoo.com}
\address{A.Ravsky: Pidstryhach Institute for Applied Problems of Mechanics and Mathematics of National Academy of Sciences, Lviv, Ukraine}
\email{oravsky@mail.ru}
\begin{document}

\begin{abstract}
We prove that a topological manifold (possibly with boundary)  admitting a continuous cancellative binary operation is orientable. This implies that the M\"obius band admits no cancellative continuous binary operation. This answers a question posed by the second author in 2010.
\end{abstract}
\keywords{Cancellative binary operation, orientable manifold}
\subjclass{22A15; 57N37}

\maketitle
\section*{Introduction}

It is known that the presence of a compatible algebraic structure on a topological space imposes strict restrictions on its topological structure. This phenomenon can be illustrated by the famous Adams' Theorem
\cite{Adam} saying that a sphere $S^n$ of finite dimension $n\notin\{0,1,3,7\}$ admits no continuous binary operation with two-sided unit. Each connected locally path connected space admitting a continuous semilattice operation has trivial homotopy groups (this follows from Lemma 2 in \cite{BS}). Another result of the same flavor says that a topological manifold admitting a continuous group operation is orientable, see \cite[15.19]{Lee} or \cite[15.10]{Bump}.

In this paper we shall generalize the latter result proving that a topological manifold $M$ (possibly with boundary) carrying a continuous cancellative binary operation is (isotopically) orientable. The notion of isotopically orientable topological space is introduced in Section~\ref{s1} where we prove that for topological manifolds the isotopical orientability is equivalent to the orientability in the classical sense. In Section~\ref{s2} we prove that a locally compact locally connected metrizable topological space is isotopically orientable provided it admits a continuous binary operation with open injective shifts and some extra property expressing the continuous cancellativity of the operation. Applying this criterion to topological manifolds, we  obtain our main result which says that a topological manifold admitting a continuous cancellative binary operation is orientable.

\section{Isotopically orientable manifolds}\label{s1}

In this section we introduce the notion of isotopical orientability of manifolds.

Let $E$ be a fixed topological space called a {\em model space}.
By an {\em $E$-manifold} we understand a paracompact topological space $M$ such that each point $x\in E$ has an open neighborhood $U_x$ that admits an open topological embedding $\varphi_x:U_x\to E$, called an {\em open chart}. The family of pairs $\{(U_x,\varphi_x):x\in M\}$ is called an {\em atlas} on $M$. For every points $x,y\in X$ the function
$$\varphi_{x,y}=\varphi_y\circ\varphi^{-1}_x|\varphi_x(U_x\cap U_y):\varphi_x(U_x\cap U_y)\to E$$ is called the {\em transition function}. This function is an open topological
embedding.

An $E$-manifold $M$ is defined to be isotopically {\em $E$-orientable} if it admits an atlas $\{(U_x,\varphi_x):x\in M\}$ such that for every points $x,y\in M$, any point $z\in \varphi_x(U_x)\cap \varphi_y(U_y)$ has an open neighborhood $U_z\subset \varphi_x(U_x\cap U_y)\subset E$ such that the transition map $\varphi_{x,y}|U_z:U_z\to \varphi_y(U_y)\subset E$ is open-isotopic to the identity embedding $\id:U_z\to E$. The latter means that there is a homotopy $(h_t)_{t\in[0,1]}:U_z\to E$ such that $h_0=\varphi_{x,y}|U_z$, $h_1=\id$ and each map $h_t:U_z\to E$ is an open embedding. Such homotopies will be called {\em open-isotopies}.

Each model space $E$ is an isotopically $E$-oriented $E$-manifold with the single chart $\id:E\to E$. In particular, the open M\"obius band $\IM$ is an isotopically $\IM$-oriented $\IM$-manifold. However, considered as an $\IR^2$-manifold, the M\"obius band $\IM$ is not isotopically $\IR^2$-oriented.

In order to kill such an ambiguity, we introduce the following

\begin{definition} A paracompact space $X$ is called {\em isotopically orientable} if there is a point $e\in X$ such that for any open neighborhood $E\subset X$ of $e$
 the space $X$ is a $E$-orientable $E$-manifold.
\end{definition}

Now our aim to show that for topological manifolds the isotopical orientability is equivalent to the usual orientability (defined via homologies). By a {\em topological manifold} we understand an $\IR^n_+$-manifold  modeled on the half-space
$$\IR^n_+=\{(x_i)_{i=1}^n\in\IR^n:x_n\ge0\}$$
for some $n\in\IN$. The {\em boundary} $\partial M$ of a topological manifold $M$ consists of all point $x\in M$ that has no open neighborhood homeomorphic to $\IR^n$.

An $\IR^n$-manifold $M$ is called {\em orientable} if it admits an atlas $\{(U_x,\varphi_x):x\in M\}$ such that for any $x,y\in M$ the transition function
$$\varphi_{x,y}:\varphi_{x}(U_x\cap U_y)\to\IR^n,\;\;\varphi_{x,y}:z\mapsto \varphi_y\circ\varphi^{-1}_x(z),$$ is orientation-preserving.
An embedding $f:U\to\IR^n$ of an open subset $U\subset\IR^n_+$ is called {\em orientation-preserving} if each point $z\in U$ has an open neighborhood $U_z\subset U$ such that the homomorphism in $n$-th homologies
$$H_n f:H_n(U_z,U_z\setminus\{z\};\IZ)\to H_n(\IR^n,\IR^n\setminus\{f(z)\};\IZ)$$
induced by the map $f|U_z$ coincides with the homomorphism
$$H_n s:H_n(U_z,U_z\setminus\{z\};\IZ)\to H_n(\IR^n_+,\IR^n_+\setminus\{f(z)\};\IZ)$$
induced by the shift $s:U_z\to\IR^n_+,\quad s:u\mapsto f(z)-z+u.$

\begin{theorem}\label{t2} An $\IR^n$-manifold $M$ is orientable if and only if it is isotopically orientable.
\end{theorem}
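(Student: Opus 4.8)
The plan is to reduce Theorem~\ref{t2} to a single local statement, which I will call the \emph{Local Lemma}: \emph{an open embedding $f\colon U\to\IR^n$ of an open subset $U\subseteq\IR^n$ is orientation-preserving if and only if every point $z\in U$ has an open neighbourhood $U_z\subseteq U$ such that $f|U_z$ is open-isotopic to the inclusion $U_z\hookrightarrow\IR^n$.} Granting this, Theorem~\ref{t2} follows. If $M$ is isotopically orientable, fix a witnessing point $e$; since $\IR^n$-manifolds have no boundary, $e$ has a neighbourhood $E$ with a homeomorphism $\theta\colon E\to\IR^n$, and by hypothesis $M$ admits an $E$-orientation atlas $\{(U_x,\varphi_x):x\in M\}$. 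Then $\{(U_x,\theta\circ\varphi_x)\}$ is an atlas of $M$ into $\IR^n$ whose transition functions are the conjugates $\theta\circ\varphi_{x,y}\circ\theta^{-1}$; conjugating by $\theta$ the open-isotopies witnessing $E$-orientability shows these are locally open-isotopic to $\theta\circ\id_E\circ\theta^{-1}=\id_{\IR^n}$, so by the ``if'' half of the Local Lemma they are orientation-preserving and $M$ is orientable. Conversely, let $M$ be orientable with orientation atlas $\{(U_x,\varphi^0_x)\}$, and choose any $e\in M$. Given a neighbourhood $E\ni e$, replace $E$ by the connected component of $e$ in it (charts into that component are charts into $E$, and open-isotopies into it compose with the inclusion to open-isotopies into $E$); being an open $\IR^n$-manifold, that component contains a coordinate ball, i.e. there is an open embedding $\Phi\colon\IR^n\hookrightarrow E$. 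Then $\{(U_x,\Phi\circ\varphi^0_x)\}$ is an atlas of $M$ into $E$ with transition functions $\Phi\circ\varphi^0_{x,y}\circ\Phi^{-1}$; the ``only if'' half of the Local Lemma makes each $\varphi^0_{x,y}$ locally open-isotopic to $\id_{\IR^n}$, and conjugating by $\Phi$ makes $\Phi\circ\varphi^0_{x,y}\circ\Phi^{-1}$ locally open-isotopic to $\id_E$, so $M$ is $E$-orientable; hence isotopically orientable.

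The ``if'' half of the Local Lemma is soft. Given an open-isotopy $(h_t)_{t\in[0,1]}\colon U_z\to\IR^n$ with $h_0=f|U_z$ and $h_1=\id$, post-compose with the translations $\tau_t\colon y\mapsto y-h_t(z)+f(z)$ to obtain open embeddings $g_t:=\tau_t\circ h_t$ with $g_t(z)=f(z)$ for all $t$, $g_0=f|U_z$, and $g_1\colon u\mapsto u-z+f(z)$ --- which is exactly the shift $s$ appearing in the definition of orientation-preserving. Each $g_t$, being injective with $g_t(z)=f(z)$, carries $U_z\setminus\{z\}$ into $\IR^n\setminus\{f(z)\}$, so $(g_t)$ is a homotopy of maps of pairs $(U_z,U_z\setminus\{z\})\to(\IR^n,\IR^n\setminus\{f(z)\})$; hence $H_nf=H_ng_0=H_ng_1=H_ns$ on $H_n(U_z,U_z\setminus\{z\};\IZ)$, which is orientation-preservation of $f$ at $z$.

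For the ``only if'' half one must construct open-isotopies. Translating the coordinates in the domain and range --- a harmless change, since translations are orientation-preserving and are open-isotopic to the identity through translations --- we may assume $z=f(z)=0$. Next observe: \emph{if $f$ agrees with the inclusion on a ball $B\subseteq U$ centred at $0$, then $f|B$ is open-isotopic to the inclusion}, via $h_t(x)=(1-t/2)^{-1}f((1-t/2)x)$, $t\in[0,1]$: each $h_t$ is a dilation, then $f$, then a dilation, hence an open embedding of $B$; $h_0=f|B$; and $h_1(x)=2f(x/2)=x$ because $x/2$ lies in the ball where $f$ is the identity. So it remains to open-isotope $f$, on some ball about $0$, to a map that agrees with the inclusion on a smaller ball.

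This last reduction is the heart of the proof, and I expect it to be the main obstacle; it is also the one point sensitive to the dimension $n$. The route I would follow: for small $\e$ the map $f$ is defined on a neighbourhood of $\overline{B_\e(0)}$, so the embedded sphere $f(\partial B_\e(0))$ is bicollared in $\IR^n$, and the generalized Schoenflies theorem makes $f(\overline{B_\e(0)})$ a round ball up to a homeomorphism of $\IR^n$; an Alexander-type coning isotopy then reduces the problem to a homeomorphism of round balls, hence to its restriction to the bounding sphere, where the orientation-preservation hypothesis together with connectedness properties of homeomorphism groups (whose elementary linear shadow is the connectedness of $\mathrm{GL}^+_n(\IR)$) lets one isotope to the inclusion near $0$. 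Keeping every intermediate map an honest open embedding through the Schoenflies straightening, and supplying the higher-dimensional topological input, is where the real effort concentrates. I note that the application in Section~\ref{s2} uses only the implication ``isotopically orientable $\Rightarrow$ orientable'', which rests solely on the soft ``if'' half of the Local Lemma.
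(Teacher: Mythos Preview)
Your proposal follows essentially the same route as the paper. Packaging the argument as a Local Lemma is a clean device, and your proof of the ``if'' half --- translating the open-isotopy so that $z$ stays mapped to $f(z)$ and then invoking homotopy invariance of relative $H_n$ --- is exactly the paper's argument.

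For the ``only if'' half your outline is correct in shape but underspecified at the decisive step. The paper names precisely what your phrase ``connectedness properties of homeomorphism groups'' conceals: that every orientation-preserving self-homeomorphism of $S^{n-1}$ is isotopic to the identity. In the topological category this is the Stable Homeomorphism Theorem, equivalent by Brown--Gluck to the Annulus Theorem (Rad\'o, Moise, Kirby, Quinn); the generalized Schoenflies theorem alone does not deliver it. Schoenflies straightens $f(\partial B_\e)$ to a round sphere via some ambient homeomorphism $\Psi$, but you then need $\Psi$ itself, and the resulting ball homeomorphism $\Psi\circ f$, to be isotopic to the identity --- both of which feed back into Stable Homeomorphism/Annulus. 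The paper's chain is: Annulus extends $\varphi_{x,y}|\overline{U}_z$ to a homeomorphism $h$ of a large ball $\overline{B}(R)$; Stable Homeomorphism makes $h|S(R)$ isotopic to $\id$; one then modifies $h$ to be the identity on $S(R)$; finally the Alexander trick isotopes $h$ to $\id_{\overline{B}(R)}$. Your sketch traverses the same chain with the hardest link left unnamed.

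A small wording slip: your intermediate claim ``if $f$ agrees with the inclusion on a ball $B$ then $f|B$ is open-isotopic to the inclusion'' is vacuous as written (then $f|B=\id$ already); your formula and its justification show you meant that $f$ agrees with the identity on $\tfrac12 B$, whence $f|B$ is open-isotopic to $\id|B$.

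Your closing remark --- that Section~\ref{s2} uses only the soft ``if'' implication --- is correct and worth retaining.
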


\begin{proof} To prove the ``if'' part, assume that the $\IR^n$-manifold $M$ is isotopically orientable. Then there is a point $e\in M$ such that for every open neighborhood $E\subset M$ of $e$ the space $M$ is an $E$-orientable $E$-manifold. Choose an open neighborhood $E\subset M$ of $e$ homeomorphic to $\IR^n$. To simplify notation we identify $E$ with the Euclidean space $\IR^n$.

Let $\{(U_x,\varphi_x):x\in M\}$ be an atlas witnessing that $M$ is an $E$-orientable $E$-manifold. Given any points $x,y\in M$, we should check that the transition map $\varphi_{x,y}:\varphi_x(U_x\cap U_y)\to \varphi_y(U_x\cap U_y)$ is orientation-preserving. Given any point $z\in\varphi_x(U_x\cap U_y)$ the $E$-orientability of $M$ yields an open neighborhood $U_z\subset \varphi_x(U_x\cap U_y)$ and an open-isotopy $(i_t)_{t\in[0,1]}:U_z\to E$ connecting the map $i_0=\varphi_{x,y}|U_z:U_z\to \varphi_y(U_y)\subset E$ with the identity inclusion $i_1:U_z\to \varphi_{x}(U_x)\subset E$ of $U_z$. The isotopy $(i_t)$ induces the isotopy $(j_t):U_z\to \IR^n$ defined by the formula $j_t(u)=i_t(u)+i_0(z)-i_t(z)$ for $t\in[0,1]$ and $u\in U_z$.
In fact, the isotopy $(j_t)$ is a homotopy of the pairs $(j_t):(U_z,U_z\setminus\{z\})\to (E,E\setminus\{\varphi_{x,y}(z)\})$. By the homotopical invariance of homology, the maps $\varphi_{x,y}=i_0=j_0$ and $j_1:u\mapsto u-z+\varphi_{x,y}(u)$ induce the same homomorphism in $n$-th
homologies $H_nj_0=H_nj_1:H_n(U_z,U_z\setminus\{x\})\to (E,E\setminus\{\varphi_{x,y}(z)\})$, witnessing that $\varphi_{x,y}$ is orientation-preserving and $M$ is orientable.

To prove the ``only if''part, assume that the $\IR^n$-manifold $M$ is orientable. Fix any point $e\in M$. Given any open neighborhood $E\subset M$ of $x$ we shall prove that $M$ is an $E$-orientable $E$-manifold. Replacing $E$ by a smaller open neighborhood, we can assume that $E$ is homeomorphic to $\IR^n$. To simplify notation, we shall identify $E$ with $\IR^n$. Since $M$ is an orientable $\IR^n$-manifold, there exists an atlas $\{(U_x,\varphi_x):x\in M\}$ such that for any points $x,y\in M$ the transition map $\varphi_{x,y}:\varphi_x(U_x\cap U_y)\to \varphi_y(U_x\cap U_y)$ is orientation preserving. This atlas witnesses that $M$ is an $E$-manifold. To show that this $E$-manifold is $E$-oriented, for any points $x,y\in M$ and $z\in \varphi_x(U_x\cap U_y)$ we should find a neighborhood $U_z\subset\varphi_x(U_x\cap U_y)$ such that the map $\varphi_{x,y}|U_z:U_z\to \varphi_y(U_x\cap U_y)\subset E$ is open-isotopic to the identity embedding $U_z\subset\varphi_x(U_x\cap U_y)\subset E$.
Choose  two open neighborhoods $U_z\subset W_z$ of $z$ in $\varphi_x(U_x\cap U_y)$ such that the pair $(W_z,\overline{U}_z)$ is homeomorphic to the pair of balls $(B(2),\overline{B}(1))$ in $\IR^n$.
Here for $r>0$ by $B(r)=\{a\in\IR^n:\|a\|<r\}$ and $\overline{B}(r)=\{a\in\IR^n:\|a\|\le r\}$ we denote the open and closed $r$-balls centered at the origin. It follows that the boundaries $\partial U_z$ and $\varphi_{x,y}(\partial U_z)$ are bicollared topological $(n-1)$-spheres in $\IR^n$. Choose a positive real number $R$ so large that $\overline{U_z}\cup\varphi_{x,y}(\overline{U_z})\subset B(R)$.
The Annulus Theorem (proved by combined efforts of  Rad\'o \cite{Rado}, Moise \cite{Moise}, Kirby \cite{Kirby} and Quinn \cite{Quinn}), allows us to extend the homeomorphism $\varphi_{x,y}|\overline{U}_z$ to a homeomorphism $h$ of the closed ball $\overline{B}(R)$. Since the embedding $\varphi_{x,y}|U_z:U_z\to B(R)$ preserves the orientation, the homeomorphism $h$ preserves the orientation of $\overline{B}(R)$ and the restriction $h|S(R)$ preserves the orientation of the sphere $S(R)=\overline{B}(R)\setminus B(R)$. By Brown-Gluck Theorem \cite{BG}, the Annulus Theorem implies the Stable Homeomorphism Theorem, which implies that the homeomorphism group of the sphere $S(R)$ has two connected components and hence the orientation-preserving homeomorphism $h|S(R)$ is isotopic to the identity homeomorphism. This fact can be used to show that the homeomorphism $h$ can be replaced by a homeomorphism of $\bar B(R)$, which coincides with $\varphi_{x,y}|\overline{U}_z$ on $\overline{U}_z$ and is identity on the sphere $S(R)$. Now the Alexander trick \cite{Alex} (see also Lemma 5.6 in \cite{Hilden}) guarantees that $h$ is isotopic to the identity homeomorphism of $\bar B(R)$ and hence $\varphi_{x,y}|U_z=h|U_z$ is open-isotopic to the identity embedding $i:U_z\to E=\IR$. This means that the $E$-manifold $M$ is $E$-orientable.
\end{proof}

An $\IR^n_+$-manifold $M$ is defined to be {\em orientable} if its interior $M\setminus \partial M$ is an orientable $\IR^n$-manifold.  Theorem~\ref{t2} implies:

\begin{corollary} An $\IR^n_+$-manifold $M$ is orientable if and only if its interior $M\setminus\partial M$  is isotopically orientable.
\end{corollary}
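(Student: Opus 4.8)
The plan is to derive the corollary directly from Theorem~\ref{t2} together with the definition of orientability for $\IR^n_+$-manifolds stated immediately above it. The first step is to observe that the interior $M\setminus\partial M$ of an $\IR^n_+$-manifold $M$ is genuinely an $\IR^n$-manifold in the sense of this paper. Indeed, by the very definition of $\partial M$, every point $x\in M\setminus\partial M$ has an open neighborhood homeomorphic to $\IR^n$; and $M\setminus\partial M$ is paracompact, being an open subspace of the paracompact space $M$ (a topological manifold is metrizable, and subspaces of metrizable spaces are metrizable, hence paracompact). Consequently the classical notion of orientability applies to $M\setminus\partial M$, and by the definition recalled above, $M$ is orientable if and only if $M\setminus\partial M$ is an orientable $\IR^n$-manifold.

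The second step is to apply Theorem~\ref{t2} to the $\IR^n$-manifold $M\setminus\partial M$: it asserts that $M\setminus\partial M$ is orientable if and only if it is isotopically orientable. Chaining the two equivalences obtained in the two steps yields precisely the statement of the corollary. No separate argument is required for either implication, since both are symmetric combinations of the same two biconditionals.

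I do not anticipate any real obstacle here. All the topological heavy machinery---the Annulus Theorem, the Stable Homeomorphism Theorem, the Alexander trick---has already been absorbed into the proof of Theorem~\ref{t2}, so the corollary is a purely formal consequence of that theorem and a definitional unravelling. The only point deserving an explicit word of justification is the paracompactness (equivalently, metrizability) of the open subspace $M\setminus\partial M$, which must be noted so that both Theorem~\ref{t2} and the definition of isotopical orientability are literally applicable to it.
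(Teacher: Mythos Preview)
Your proposal is correct and follows exactly the approach the paper indicates: the paper simply writes ``Theorem~\ref{t2} implies:'' before stating the corollary, and your argument is precisely the unpacking of that implication via the definition of orientability for $\IR^n_+$-manifolds given just above. Your explicit check that $M\setminus\partial M$ is a paracompact $\IR^n$-manifold is a reasonable addition, but otherwise there is nothing to compare.
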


\section{Isotopical orientability of spaces and manifolds with a compatible algebraic structure}\label{s2}

Let us recall that a map $f:X\to Y$ between topological spaces is {\em open} if for every open set $U\subset X$ the image $f(U)$ is open in $Y$. It is clear that each  injective open continuous map is a topological embedding.

\begin{theorem}\label{E} A locally compact locally connected metrizable space $S$ is isotopically orientable if $S$ admits a continuous binary operation $\cdot:S\times S\to S$ and a point $e\in S$ such that
\begin{enumerate}
\item the right shift $\rho_e:S\to S$, $\rho_e:x\mapsto xe$, is injective and open;
\item for every $a\in S$ the left shift $\lambda_a:S\to S$, $\lambda_a:x\mapsto ax$, is injective and open;
\item for every $a\in S$ and neighborhood $O_e\subset S$ of $e$ there is a neighborhood $U_a\subset S$ of $a$ such that $\bigcap_{x\in U_a}xO_e$ is a neighborhood of the point $ae$ in $S$.
\end{enumerate}
\end{theorem}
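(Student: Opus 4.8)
The plan is to build an atlas on $S$ out of translates of a fixed model neighborhood of $e$ and then verify the open-isotopy condition for transition functions by an explicit homotopy "through left shifts". Concretely, fix a point $e$ as in the hypotheses. Using local compactness, local connectedness, and metrizability, choose an open connected neighborhood $E\subset S$ of $e$ with compact closure. I first claim that for every $a\in S$ the left shift $\lambda_a$ restricts to an open topological embedding of a suitable neighborhood of $e$ into $S$; this is immediate from hypothesis~(2), since $\lambda_a$ is injective, open, and continuous, hence an embedding on all of $S$, in particular on $E$. For each $a\in S$ set $U_a=\lambda_a(E)=aE$, an open neighborhood of $ae$, with chart $\varphi_a=\lambda_a^{-1}|_{U_a}:U_a\to E$. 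To turn this into an atlas indexed by the points of $S$ we must know that the sets $\{aE:a\in S\}$ cover $S$, i.e. that every $s\in S$ lies in some $aE$. Here is where hypothesis~(1) enters: the right shift $\rho_e$ is an open injective continuous map, hence an embedding, and $\rho_e(S)=Se$ is open in $S$; but in fact one wants the family $\{aE\}$ to be a cover, which follows because $s=se\in sE$ whenever $e\in E$ — wait, that gives $s\in sE$ directly, so no issue. (More carefully: one may need to replace $S$ by the open submanifold $\bigcup_{a}aE$, or argue that $se\in sE$ and $\rho_e$ open lets one transport charts; I will use $s\in sE$ since $e\in E$.) Thus $\{(U_a,\varphi_a):a\in S\}$ is an atlas modeled on $E$.

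The heart of the argument is the open-isotopy condition. Take $a,b\in S$ and a point $z\in\varphi_a(U_a)\cap\varphi_b(U_b)=E\cap E=E$ lying in $\varphi_a(U_a\cap U_b)$; concretely $az=bw$ for the point $w=\varphi_b(az)\in E$, and the transition function near $z$ is $\varphi_{a,b}:v\mapsto \lambda_b^{-1}(\lambda_a(v))=b^{-1}(av)$ wherever this makes sense. The idea is to deform $a$ to $b$ along a path and push $z$ along accordingly. Since $S$ is locally connected and metrizable and $az=bw$ with both sides in the open set $Se\cap\cdots$, choose (using connectedness of a neighborhood, then path-connectedness — local connectedness plus the manifold-like chart structure gives local path-connectedness, since $E$ is homeomorphic to an open subset of itself and we may further shrink $E$ to be path connected) a path $(a_t)_{t\in[0,1]}$ in $S$ from $a_0=a$ to $a_1=b$ staying in a small neighborhood of $a$. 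For each $t$ define $h_t:U_z\to S$ on a sufficiently small neighborhood $U_z\subset E$ of $z$ by $h_t(v)=\lambda_{a_t}^{-1}(\lambda_a(v))=a_t^{-1}(av)$, so that $h_0=\id$ and $h_1=\varphi_{a,b}$ near $z$. Hypothesis~(2) guarantees each $\lambda_{a_t}$ is an open embedding, so each $h_t$ is an open embedding on its domain; continuity of the operation gives joint continuity of $(t,v)\mapsto a_tv$, and hence (with some care inverting) of $(t,v)\mapsto h_t(v)$.

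The main obstacle — and the reason hypothesis~(3) is present — is ensuring that the homotopy $h_t$ is \emph{defined on a single fixed neighborhood $U_z$ of $z$} for all $t\in[0,1]$ simultaneously, rather than on a $t$-dependent domain. The point $h_t(v)=a_t^{-1}(av)$ is defined precisely when $av\in a_t E$, i.e. when $v\in a^{-1}(a_tE)$; we need $a^{-1}\big(\bigcap_{t}a_tE\big)$ to be a neighborhood of $z$. Applying hypothesis~(3) with the element $a$ and the neighborhood $E$ of $e$: shrinking the path so that $\{a_t:t\in[0,1]\}\subset U_a$ for the neighborhood $U_a$ provided by~(3), we get that $\bigcap_{t}a_tE\supset\bigcap_{x\in U_a}xE$ is a neighborhood of $aE\ni az$, hence $a^{-1}$ of it is a neighborhood of $z$; intersect with $E$ and with the original domain of $\varphi_{a,b}$ to obtain the desired $U_z$. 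On this $U_z$ the family $(h_t)$ is an open-isotopy from $\id|_{U_z}$ to $\varphi_{a,b}|_{U_z}$, which is exactly the condition required for $E$-orientability. Since $E$ was an arbitrary sufficiently small neighborhood of $e$ (and any smaller one works by the same argument), $S$ is isotopically orientable. The remaining routine points to check are: that $E$ can indeed be chosen connected, path-connected, with compact closure (local compactness + local connectedness + metrizability, passing to a smaller neighborhood); that "open embedding" survives restriction and composition; and that the inversion $\lambda_{a_t}^{-1}$ depends continuously on $t$, which follows from openness of the shifts together with compactness of $[0,1]$ and of $\overline{E}$.
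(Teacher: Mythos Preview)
Your overall strategy---charts $\varphi_a=\lambda_a^{-1}$ on $aE$ and an isotopy $h_t=\lambda_{a_t}^{-1}\circ\lambda_a$ along a path $a_t$ from $a$ to $b$---is exactly the paper's, but the execution has a genuine gap. With a \emph{single fixed} model neighborhood $E$ and chart domains $U_a=aE$, the condition $U_a\cap U_b\neq\emptyset$ does not force $a$ and $b$ to be close: two translates $aE$, $bE$ can overlap with $a,b$ far apart in $S$. You nevertheless assume you can choose a path from $a$ to $b$ inside the neighborhood $U_a$ supplied by hypothesis~(3); nothing justifies this. Relatedly, your invocation of~(3) is off: it yields that $\bigcap_{x\in U_a}xE$ is a neighborhood of $ae$, whereas you need it to be a neighborhood of $az$ for the given $z\in E$, which does not follow. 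The paper fixes both problems at once by making the chart domains \emph{variable and small}: exhaust the connected component by compacta $K_n$, choose a decreasing sequence $(\e_n)$, and set $U_{xe}=x\cdot B(\e_{n_x+5})$ where $x\in K_{n_x}\setminus K_{n_x-1}$. The $\e_n$ are tuned (via compactness of the $K_n$) so that (i) overlapping charts force $d(x,y)<\e_{n-1}$, hence a short path exists, and (ii) the \emph{entire} set $x\cdot\overline{B}(\e_n)$, not merely a neighborhood of $xe$, lies in $\bigcap_{y\in B_x(\e_n)}y\cdot B(\e_0)$---a uniform strengthening of~(3) on each $K_n$.

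Two smaller points. Your covering argument is wrong: from $e\in E$ one gets $se\in sE$, not $s\in sE$, so the family $\{aE\}$ is only seen to cover $Se$; the paper handles this by proving $E$-orientability of the homeomorphic open image $Xe=\rho_e(X)$ rather than of $X$ itself (this is where hypothesis~(1) is actually used). And local path-connectedness is not automatic from ``locally connected metrizable''; it holds here because a connected, locally connected, locally compact metric space is locally a Peano continuum and hence locally arcwise connected, but this should be said.
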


\begin{proof} Since the space $S$ is locally connected, the connected components of $S$ are closed-and-open in $S$.
Now we see that it suffices to prove that each connected component $X$ of $S$ is an $E$-orientable $E$-manifold for any open neighborhood $E$ of the point $e$.

The metrizable space $X$, being connected and locally compact, can be written as the countable union $X=\bigcup_{n\in\w}K_n$ of compact subsets $K_n\subset X$ such that $K_0=\emptyset$ and  each $K_n$ lies in the interior $K_{n+1}^\circ$ of $K_{n+1}$ in $X$.

By our hypothesis, the right shift $\rho_e:X\to S$ is an open embedding. Since the image
$Xe=\rho_e(X)$ is homeomorphic to $X$, it suffices to check that $Xe$ is an $E$-orientable $E$-manifold. 

Fix a metric $d$ generating the topology of the space $S$. For a point $x\in S$ and a real number $r>0$ denote by $B_x(r)=\{y\in S:d(x,y)<r\}$ the open $r$-ball centered at $x$ and let $\overline{B}_x(r)$ be the closure of $B_x(r)$ in $S$. The $r$-balls $B_e(r)$ and $\overline{B}_e(r)$ centered at the point $e$  will be denoted by $B(r)$ and $\bar B(r)$, respectively.

Find $\e_0>0$ such that the ball $B(\e_0)$ has compact closure in the neighborhood $E$ of $e$. Using the condition (3) of the theorem and the compactness of the sets $K_n$ and $K_{n}\setminus K_{n-1}^\circ$, $n\in\IN$, it is easy to construct a decreasing sequence of positive real numbers $(\e_n)_{n\in\IN}$ such that for every $n\in\IN$ the following conditions are satisfied:
\begin{itemize}
\item[(a)] $\overline{B}_x(\e_n)\subset K_{n+1}^\circ\setminus K_{n-2}$ for every $x\in K_n\setminus K_{n-1}^\circ$;
\item[(b)] $x\cdot\overline{B}(\e_n)\subset (K_{n+1}^\circ\setminus K_{n-2})\cdot e$ for every $x\in K_n\setminus K_{n-1}^\circ$;
\item[(c)] $x\cdot \overline{B}(\e_n)\subset \bigcap_{y\in B_x(\e_n)}y\cdot B(\e_0)$
for every $x\in K_n$;
\item[(d)] for every $x,y\in K_n$ with  $x\cdot \overline{B}(\e_n)\cap y\cdot\overline{B}(\e_n)\ne\emptyset$, we get $d(x,y)<\e_{n-1}$.
\item[(e)] for every points $x,y\in K_n$ with $d(x,y)<\e_n$ there is a continuous path $\pi:[0,1]\to X$ of diameter $<\e_{n-1}$ such that $\pi(0)=x$ and $\pi(1)=y$.
\end{itemize}

Now we shall construct an atlas $\{(U_{xe},\varphi_{xe}):xe\in Xe\}$ witnessing that $Xe$ is an $E$-orientable $E$-manifold. 

For every point $x\in X$ find $n_x\in\IN$ with $x\in K_{n_x}\setminus K_{n_x-1}$ (we recall that $K_0=\emptyset$).
The property (b) guarantees that $x\cdot \overline{B}(\e_{n_x+5})\subset x\cdot \overline{B}(\e_{n_x})\subset K_{n_x+1}\cdot e\subset Xe$ and hence $$U_{xe}:=x\cdot B(\e_{n_x+5})=\lambda_x(B(\e_{n_x+5}))\subset Xe$$ is an open neighborhood of the point $xe$ in $Xe$.
Define the chart $\varphi_{xe}:U_{xe}\to B(\e_{n_x+5})\subset E$ by the formula
$\varphi_{xe}=\lambda_x^{-1}|U_{xe}$ and observe that the map $\varphi_{xe}:U_{xe}\to E$ is an open embedding being the inverse map to the open embedding $\lambda_x|B(\e_{n_x+5}):B(\e_{n_x+5})\to U_{xe}\subset Xe$.

We claim that the atlas $\{(U_{xe},\varphi_{xe}):xe\in Xe\}$ witnesses that $Xe$ is an $E$-oriented $E$-manifold. Take any two points $xe,ye\in Xe$ such that $U_{xe}\cap U_{ye}\ne\emptyset$ and consider the transition function
$$\varphi_{xe,ye}:\varphi_{xe}(U_{xe}\cap U_{ye})\to \varphi_{ye}(U_{xe}\cap U_{ye})\subset E,\quad \varphi_{xe,ye}:z\mapsto \varphi_{ye}\circ\varphi^{-1}_{xe}(z)=\lambda_{y}^{-1}\circ\lambda_{x}(z)$$ defined on the open subset $\varphi_{xe}(U_{xe}\cap U_{ye})$ of $E$. Given any point $z\in \varphi_{xe}(U_{xe}\cap U_{ye})$, we need to find a neighborhood $U_z\subset \varphi_{xe}(U_{xe}\cap U_{ye})$ such that the map $\varphi_{xe,ye}|U_z:U_z\to E$ is open-isotopic to the identity embedding $\id:U_z\to E$. Fix any neighborhood $U_z$ of $z$ with compact closure $\bar U_z$ in $\varphi_{xe}(U_{xe}\cap U_{ye})$.

Let $n_x,n_y\in\IN$ be the unique numbers such that $x\in K_{n_x}\setminus K_{n_x-1}$ and $y\in K_{n_y}\setminus K_{n_y-1}$.  We claim that $|n_x-n_y|\le 2$.
The property (b) guarantees $$U_{xe}=x\cdot B(\e_{n_x+5})\subset x\cdot B(\e_{n_x})\subset(K_{n_x+1}\setminus K_{n_x-2})\cdot e\mbox{ \ and \ }U_{ye}\subset (K_{n_y+1}\setminus K_{n_y+2})\cdot e.$$
Since $U_{xe}\cap U_{ye}\ne\emptyset$, the injectivity of the shift $\rho_e$ implies
 $$(K_{n_x+1}\setminus K_{n_x-2})\cap (K_{n_y+1}\setminus K_{n_y-2})\ne\emptyset,$$which yields $|n_x-n_y|\le 2$.
Then for the number $n=\max\{n_x,n_y\}$ we get $x,y\in K_n\setminus K_{n-3}$.
Also $U_{xe}=x\cdot B(\e_{n_x+5})\subset x\cdot B(\e_{n+3})$.

Since $\emptyset \ne U_{xe}\cap U_{ye}=x{\cdot} B(\e_{n_x+5})\cap y{\cdot} B(\e_{n_x+5})\subset x{\cdot}B(\e_{n+3})\cap y{\cdot}B(\e_{n+3})$, the item (d) ensures that $d(x,y)<\e_{n+2}$. By the item (e), the points $x,y$ can be linked by a continuous path $\pi:[0,1]\to S$ of diameter $<\e_{n+1}$ such that $\pi(0)=x$, $\pi(1)=y$.

Now we are able to prove that the map $\varphi_{xe,ye}|U_z:U_z\to E$ is open-isotopic to the identity embedding $U_z\to E$.
For this consider the continuous map $g:[0,1]\times \overline{B}(\e_0)\to [0,1]\times S$, $g:(t,z)\mapsto (t,\pi(t)\cdot z)$, defined on the compact set $[0,1]\times\overline{B}(\e_0)$. The injectivity of the left shifts on $S$ implies that the map $g$ is injective and hence is a topological embedding.

The choice of the path $\pi$ guarantees that  $\pi([0,1])\subset B_x(\e_{n+1})$. Then condition (c) implies that
$$x\overline{U}_{z}= \varphi^{-1}_{xe}(\overline U_z)\subset U_{xe}\subset x\cdot B(\e_{n+1})\subset \bigcap_{t\in[0,1]}\pi(t)\cdot B(\e_0)=\bigcap_{t\in[0,1]}\lambda_{\pi(t)}(B(\e_0)).$$ Consequently, the map $g^{-1}|[0,1]\times x\overline{U}_z:[0,1]\times x\overline{U}_z\to [0,1]\times\overline{B}(\e_0)$ is a topological embedding and the map $h:[0,1]\times \overline{U}_z\to [0,1]\times\overline{B}(\e_0)$, \ $h:(t,u)\mapsto (t,\lambda_{\pi(t)}^{-1}{\circ}\lambda_x(u))$, is a well-defined topological embedding of $[0,1]\times \overline{U}_z$ into $[0,1]\times \overline{B}(\e_0)\subset [0,1]\times E$. Since for every $t\in[0,1]$ the map $\lambda_{\pi(t)}^{-1}{\circ}\lambda_x|U_z:U_z\to E$ is an open embedding, the family $\big(\lambda^{-1}_{\pi(t)}{\circ}\lambda_x|U_z)_{t\in[0,1]}$ is an open-isotopy linking the transition map $\varphi_{xe,ye}=\lambda_{\pi(1)}^{-1}{\circ}\lambda_x|U_z$ with the identity map $\lambda_{\pi(0)}^{-1}{\circ}\lambda_x|U_z$ of $U_z$.
\end{proof}

Applying Theorems~\ref{t2} and \ref{E} to topological manifolds, we get the following theorem.

\begin{theorem}\label{man} A topological manifold $M$ is orientable provided $M$ admits a continuous binary operation\break $\cdot :M\times M\to M$ such that
\begin{enumerate}
\item for some $e\in M\setminus\partial M$ the right shift $\rho_e:M\to M$ is injective;
\item for every $x\in M\setminus\partial M$ the left shift $\lambda_x:M\to M$ is injective.
\end{enumerate}
\end{theorem}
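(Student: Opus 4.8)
The strategy is to apply Theorem~\ref{E} to the interior $S=M\setminus\partial M$ of $M$ and then finish with the corollary to Theorem~\ref{t2}. As an open subset of $M$ consisting of points with Euclidean neighborhoods, $S$ is an $\IR^n$-manifold, hence locally compact and locally connected; moreover $M$ is metrizable (being paracompact and locally metrizable), so its open subset $S$ is metrizable as well. Thus $S$ satisfies the standing hypotheses of Theorem~\ref{E}. The whole argument rests on the Brouwer Invariance of Domain Theorem in the following form: a continuous injective map from an $n$-manifold without boundary into an $n$-manifold is an open topological embedding, and its image, being an $n$-manifold without boundary, is disjoint from the boundary of the target.

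First I would check that the operation restricts to $S$ and then verify conditions (1) and (2) of Theorem~\ref{E} with the same point $e$. For $x\in S$ the left shift $\lambda_x|S:S\to M$ is continuous and injective (by hypothesis~(2)), so by Invariance of Domain it is an open embedding whose image lies in $S$; in particular $x\cdot z\in S$ for all $x,z\in S$, the operation $\cdot$ restricts to a continuous binary operation on $S$, and each $\lambda_x|S:S\to S$ is an open embedding --- this is condition~(2) of Theorem~\ref{E}. Applying the same reasoning to $\rho_e|S:S\to M$, which is continuous and injective by hypothesis~(1) (note that $e\in S$), we obtain that $\rho_e|S:S\to S$ is an open embedding, which is condition~(1).

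The crux is condition~(3). The plan is to apply Invariance of Domain to the translation map $\Phi:S\times S\to S\times S$, $\Phi(x,z)=(x,x\cdot z)$, which is continuous and is injective because every $\lambda_x$ is. Since $S\times S$ is a $2n$-manifold without boundary, $\Phi$ is an open embedding onto an open set $V\subset S\times S$, and $\Phi^{-1}$ has the form $\Phi^{-1}(x,w)=(x,\theta(x,w))$ for a continuous map $\theta:V\to S$ satisfying $\theta(x,x\cdot z)=z$. Now fix $a\in S$ and a neighborhood $O_e\subset S$ of $e$; we may assume $O_e$ is open. Since $(a,ae)=\Phi(a,e)\in V$ and $\theta(a,ae)=e\in O_e$, the continuity of $\theta$ together with the openness of $V$ in $S\times S$ yields basic open neighborhoods $U_a$ of $a$ and $W$ of $ae$ with $U_a\times W\subset V$ and $\theta(U_a\times W)\subset O_e$. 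Then for every $w\in W$ and every $x\in U_a$ we have $(x,w)\in V$, hence $w=x\cdot\theta(x,w)\in x\cdot O_e$; therefore $W\subset\bigcap_{x\in U_a}x\cdot O_e$, and this intersection is a neighborhood of $ae$. This establishes condition~(3).

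With conditions (1)--(3) verified, Theorem~\ref{E} shows that $S=M\setminus\partial M$ is isotopically orientable, whence the corollary to Theorem~\ref{t2} gives that $M$ is orientable. I expect the main obstacle to be condition~(3): one must recognise that the ``continuous cancellativity'' property demanded by Theorem~\ref{E} holds automatically here, being nothing but the local openness of the translation map $(x,z)\mapsto(x,x\cdot z)$ that Invariance of Domain supplies; and it is precisely the passage from $M$ to its boundaryless interior $S$ that legitimises these applications of Invariance of Domain.
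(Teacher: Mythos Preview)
Your proof is correct. The reduction to the interior $S=M\setminus\partial M$ and the verification of conditions~(1) and~(2) of Theorem~\ref{E} via Invariance of Domain are exactly what the paper does. The genuine difference is in condition~(3). The paper argues pointwise: for a fixed $a$ it chooses a small closed ball $\bar U_e$ around $e$, identifies $a\bar U_e$ with a Euclidean ball, and for $u$ close to $a$ uses the Brouwer Fixed Point Theorem to force $ae\in u\bar U_e$; then a connectedness argument together with the Jordan--Brouwer Separation Theorem upgrades this to an inclusion $W_a e\subset\bigcap_{u\in U_a}uO_e$. Your route is cleaner: you observe that the translation map $\Phi(x,z)=(x,x\cdot z)$ is a continuous injection of the boundaryless $2n$-manifold $S\times S$ into itself, invoke Invariance of Domain once more in dimension $2n$ to get that $\Phi$ is an open embedding, and read off condition~(3) from the continuity of $\Phi^{-1}$. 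This replaces the separate appeals to the Fixed Point and Separation theorems by a single application of Invariance of Domain in the product, at the modest cost of working in twice the dimension; conceptually it makes transparent that condition~(3) is precisely the openness of $\Phi$, which is automatic for injective continuous self-maps of manifolds without boundary.
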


\begin{proof} To show that an $\IR^n_+$-manifold $M$ is orientable, we need to check that its interior $N=M\setminus\partial M$ is orientable.

By the Open Domain Principle, each continuous injective map $f:U\to\IR^n$ defined on an open subset $U\subset \IR^n$ is an open topological embedding. This implies that  the shifts $\rho_e:N\to M$ and $\lambda_x:N\to M$, $x\in N$, are open topological embeddings. Moreover, $\rho_e(N)\subset N$ and $\lambda_x(N)\subset N$ for all $x\in N$, which means that the interior $N$ of $M$ is closed under the binary operation. Now we see that the binary operation restricted to $N$ satisfies the conditions (1) and (2) of Theorem~\ref{E}. It remains to check the condition (3). Fix any neighborhood $O_e\subset N$ of the point $e$. Given any point $x\in N$, we should find a neighborhood $U_x\subset N$ such that $\bigcap_{u\in U_x}uO_e$ is a neighborhood of $x$. Find a neighborhood $O_{xe}\subset N$ of the point $xe\in N$, homeomorphic to $\IR^n$.
Choose a neighborhood $U_e\subset N$ of $e$ that has compact closure $\bar U_e$ in $O_e$ such that  $x\cdot\bar U_e\subset O_{xe}$ and $\bar U_e$ is homeomorphic to closed unit ball $\overline{B}=\{x\in\IR^n:\|x\|\le 1\}$ in $\IR^n$. Fix a homeomorphism $\psi:x\bar U_e\to \overline{B}$ such that $\psi(xe)=\vec 0$. Using the compactness of $\bar U_e$ and the continuity of the binary operation, we can find a neighborhood $U_x\subset N$ of $x$ such that $\|\psi(u\cdot z)-\psi(x\cdot z)\|<1$ for all $u\in U_x$ and $z\in \bar U_e$. Since $xe\notin x\cdot\partial U_e$, we can replace $U_x$ by a smaller neighborhood and additionally assume that $xe\notin \bar U_x\cdot \partial U_e$.

We claim that for every $u\in U_x$ the set $u\cdot \bar U_e$ contains the point $xe$. For this consider the homeomorphism $h:\bar U_e\to \bar B$, $h:z\mapsto \psi(x\cdot z)$, and the continuous map $f:\bar U_e\to \bar B$, $f:z\mapsto \psi(x\cdot z)-\psi(u\cdot z)=h(z)-\psi(u\cdot z)$. The Brouwer Fixed Point Theorem \cite[7.3.18]{En} guarantees that the self-map $g=f\circ h^{-1}:\bar B\to\bar B$ of the closed ball $\bar B$ has a fixed point $b\in\bar B$. Find a unique point $z\in\bar U_e$ such that $h(z)=b$ and observe that $h(z)=b=g(b)=f\circ h^{-1}(b)=f(z)=h(z)-\psi(u\cdot z)$ and hence $\psi(u\cdot z)=\vec 0=\psi(xe)$, which implies that $xe=uz\in u\bar U_e$. Since $xe\notin \bar U_x\cdot \partial U_e$, we can choose a connected neighborhood $W_x\subset U_x$ of $x$ such that $W_xe\cap \bar U_x\cdot\partial O_e=\emptyset$. We claim that  $W_xe\subset \bigcap_{u\in U_x}uO_e$. To derive a contradiction, assume that some point $w\in W_x$ is not contained in the set $uO_e$ for some $u\in U_x$. It follows that $u\cdot \partial U_e$ is a topological copy of the $(n-1)$-dimensional sphere in the topological copy $O_{xe}$ of the space $\IR^n$. By Jordan-Brouwer Separation Theorem \cite[2B.1]{Hat}, the sphere $u\cdot\partial U_e$ separates $O_{xe}$ into two connected components: $uU_e$ and $O_{xe}\setminus u\bar U_e$. Taking into account that the set $W_xe$ is connected and meets both sets $u\bar U_e\ni xe$ and $O_{xe}\setminus uO_e\subset O_{xe}\setminus u\bar U_e$, we see that $W_xe$ meets the set $u\cdot \partial U_e$, which contradicts the choice of $W_x$. This contradiction shows that $\bigcap_{w\in U_x}uO_e$ is a neighborhood of $x$ in $O_{xe}$. Now it is legal to apply Theorem~\ref{E} and conclude that the $\IR^n$-manifold $N$ is isotopically orientable. By Theorem~\ref{t2}, it is orientable and then the topological manifold $M=N\cup\partial M$ is orientable as well.
\end{proof}

A binary operation $\cdot:X\times X\to X$ on a set $X$ is called {\em left cancellative} (resp. {\em right cancellative}) if for any points $x,y,z\in X$ the equality $zx=zy$ (resp. $xz=yz$) implies $x=y$.
This is equivalent to saying that for every $z\in X$ the left shift $\lambda_z:X\to X$ (resp. the right shift $\rho_z:X\to X$) is injective. A binary operation is {\em cancellative} if it is both left and right cancellative. The structure of cancellative topological semigroups on manifolds was studied by Brown, Houston \cite{BH} and Hofmann, Weiss \cite{HW}.

Applying Theorem~\ref{man} to cancellative operations, we get the following generalization of the well-known result on orientability of Lie groups.

\begin{corollary}\label{c1} Each topological manifold admitting a cancellative continuous binary operation is orientable.
\end{corollary}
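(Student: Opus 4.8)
The plan is to simply verify that a cancellative continuous binary operation satisfies the hypotheses of Theorem~\ref{man}. Recall that, by the discussion preceding the corollary, a binary operation $\cdot:M\times M\to M$ is left cancellative (resp. right cancellative) if and only if every left shift $\lambda_z$ (resp. every right shift $\rho_z$) is injective. So if $\cdot$ is cancellative, then \emph{all} left and right shifts are injective; in particular condition (2) of Theorem~\ref{man} holds automatically for every $x\in M\setminus\partial M$.

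The only point requiring a word is condition (1): we must exhibit a point $e\in M\setminus\partial M$ for which $\rho_e$ is injective. First I would dispose of the trivial case $M=\emptyset$, which is orientable. If $M\ne\emptyset$, then, since $M$ is an $\IR^n_+$-manifold, its boundary $\partial M$ is nowhere dense in $M$ (each point of $\partial M$ lies in a chart $U_x$ open-embedded in $\IR^n_+$, and $\partial\IR^n_+$ has empty interior in $\IR^n_+$), so the interior $M\setminus\partial M$ is nonempty. Pick any such point $e\in M\setminus\partial M$. By right cancellativity $\rho_e$ is injective, so condition (1) holds as well.

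Having verified both conditions, I would invoke Theorem~\ref{man} directly to conclude that $M$ is orientable. I do not expect any real obstacle here: the statement is a straightforward specialization of Theorem~\ref{man}, the substantive work having already been carried out in Theorems~\ref{t2}, \ref{E} and \ref{man}. The mild subtlety — making sure an interior point $e$ exists so that clause (1) can even be stated — is handled by the nowhere-density of $\partial M$ noted above.
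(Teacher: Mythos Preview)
Your proposal is correct and matches the paper's approach exactly: the paper presents Corollary~\ref{c1} without a separate proof, simply stating that it follows by applying Theorem~\ref{man} to cancellative operations. Your additional remark that $M\setminus\partial M\ne\emptyset$ (needed to choose $e$) is a harmless clarification the paper leaves implicit.
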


We say that an element $e\in X$ is a {\em right unit} for a binary operation on a set $X$ is $xe=x$ for all $x\in X$.

 \begin{corollary}\label{c2} A topological manifold $M$ is orientable if it admits a left cancellative continuous binary operation possessing a right unit $e\in M\notin\partial M$.
\end{corollary}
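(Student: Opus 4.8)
The plan is to deduce Corollary~\ref{c2} directly from Theorem~\ref{man} by verifying its two hypotheses. Since $M$ carries a left cancellative binary operation, every left shift $\lambda_x$ is injective, so condition (2) of Theorem~\ref{man} holds for every $x\in M$, in particular for every $x\in M\setminus\partial M$. It remains to produce a point $e\in M\setminus\partial M$ whose right shift $\rho_e$ is injective; the hypothesis hands us a right unit $e\in M\setminus\partial M$, for which $\rho_e=\id_M$ is trivially injective. Thus both conditions of Theorem~\ref{man} are met and that theorem gives the orientability of $M$.

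The only subtlety I anticipate is purely bookkeeping: the statement writes ``$e\in M\notin\partial M$'', which is surely a typo for $e\in M\setminus\partial M$, matching the requirement in Theorem~\ref{man}(1) that the chosen unit lie in the interior. So the proof amounts to one sentence of verification, and there is no real obstacle. For cleanliness I would phrase it as: a right unit is in particular an element whose right shift equals the identity map, hence is injective, so Theorem~\ref{man}(1) holds with this $e$; left cancellativity gives Theorem~\ref{man}(2); therefore $M$ is orientable.

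\begin{proof}[Proof of Corollary~\ref{c2}] Since the binary operation on $M$ is left cancellative, for every $x\in M$ the left shift $\lambda_x:M\to M$ is injective; in particular this holds for every $x\in M\setminus\partial M$, so condition (2) of Theorem~\ref{man} is satisfied. Since $e\in M\setminus\partial M$ is a right unit, the right shift $\rho_e:M\to M$, $\rho_e:x\mapsto xe=x$, coincides with the identity map of $M$ and hence is injective, so condition (1) of Theorem~\ref{man} is satisfied with this point $e$. By Theorem~\ref{man}, the manifold $M$ is orientable.
\end{proof}
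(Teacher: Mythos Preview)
Your proof is correct and is exactly the intended argument: the paper states Corollary~\ref{c2} without proof, as an immediate consequence of Theorem~\ref{man}, and your verification that the right unit makes $\rho_e=\id_M$ injective (condition~(1)) while left cancellativity gives injectivity of every $\lambda_x$ (condition~(2)) is precisely what is needed. Your reading of ``$e\in M\notin\partial M$'' as $e\in M\setminus\partial M$ is also correct.
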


\begin{remark} Corollary~\ref{c1} implies that the M\"obius band $\IM$ admits no  cancellative continuous binary operation. Yet, $\IM$ has a natural structure of an abelian topological inverse monoid, see \cite{HM}. To construct such a structure on $\IM$, consider the unit circle $\IT=\{z\in\IC:|z|=1\}$ endowed with the operation of multiplication of complex numbers and the unit interval $[0,1]$ endowed with the operation of minimum. Then the product $\IT\times [0,1]$ is a commutative compact topological inverse monoid, $(1,1)$ is the unit of $\IT\times[0,1]$, and $\IT\times\{0\}$ is the minimal ideal of $\IT\times[0,1]$. On $\IT\times[0,1]$ consider the congruence $\sim$ identifying the points $(z,0)$ and $(-z,0)$ for $z\in \IT$. Then the quotient semigroup $\IM=\IT\times[0,1]$ is homeomorphic to the M\"obius band and is a commutative compact topological inverse monoid.
\end{remark}

\end{document}